\def\l{\left}
\def\r{\right}
\def\bg{\bigg}
\def\({\bg(}
\def\){\bg)}
\def\t{\text}
\def\f{\frac}
\def\ord{{\rm ord}}
\def\eq{\equiv}
\def\Z{\mathbb Z}
\def\C{\mathbb C}
\def\N{\mathbb N}
\def\1{{\bf 1}}
\theoremstyle{plain}
\newtheorem{theorem}{Theorem}[section]
\newtheorem{lemma}{Lemma}[section]
\newtheorem{corollary}{Corollary}
\theoremstyle{definition}
\theoremstyle{remark}
\newtheorem{remark}{Remark}
\def\<{\langle}
\def\>{\rangle}
\numberwithin{equation}{section}
\begin{document}
\hbox{}
\medskip

\title[Congruences involving generalized central trinomial coefficients]{Some parametric congruences involving generalized central trinomial coefficients}
\author{Chen Wang}
\address {(Chen Wang) Department of Applied Mathematics, Nanjing Forestry
University, Nanjing 210037, People's Republic of China}
\email{cwang@smail.nju.edu.cn}

\author{Zhi-Wei Sun}
\address {(Zhi-Wei Sun) Department of Mathematics, Nanjing
University, Nanjing 210093, People's Republic of China}
\email{zwsun@nju.edu.cn}

\subjclass[2010]{Primary 11A07, 11B75; Secondary 05A10, 11B65}
\keywords{Congruences, generalized central trinomial coefficients, binomial coefficients, harmonic numbers}
\thanks{This work was supported by the National Natural Science Foundation of China (grant no. 11971222)}
\begin{abstract} For $n\in\N=\{0,1,2,\ldots\}$ and $b,c\in\Z$, the $n$th generalized central trinomial coefficient $T_n(b,c)$ is the coefficient of $x^n$ in the expansion of $(x^2+bx+c)^n$. In particular, $T_n=T_n(1,1)$ is the central trinomial coefficient. In this paper, we mainly establish some parametric congruences involving generalized central trinomial coefficients. As consequences, we prove that for any prime $p>3$
$$
\sum_{k=0}^{p-1}\frac{\binom{2k}{k}}{12^k}T_k\equiv\left(\frac{p}{3}\right)\frac{3^{p-1}+3}{4}\pmod{p^2}
$$
and
$$
\sum_{k=0}^{p-1}\frac{T_kH_k}{3^k}\equiv\frac{3+\left(\frac{p}{3}\right)}{2}-p\left(1+\left(\frac{p}{3}\right)\right)\pmod{p^2},
$$
where $(-)$ denotes the Legendre symbol and $H_k:=\sum_{j=1}^k1/j$ denotes the $k$th harmonic number. These confirm two conjectural congruences of the second author.
\end{abstract}
\maketitle

\section{Introduction}

For any $n\in\N=\{0,1,2,\ldots\}$ and $b,c\in\Z$, the generalized central trinomial coefficient $T_n(b,c)$ introduced by Noe \cite{Noe} is defined as the coefficient of $x^n$ in the expansion of $(x^2+bx+c)^n$ (or the constant term in the expansion of $(x+b+c/x)^n$). By the multinomial theorem, it is clear that
\begin{equation}\label{tnbcformula}
T_n(b,c)=\sum_{k=0}^{\lfloor n/2\rfloor}\binom{n}{2k}\binom{2k}{k}b^{n-2k}c^k,
\end{equation}
where $\lfloor x\rfloor$ denotes the largest integer not exceeding $x$. The generalized central trinomial coefficients have many interesting combinatorial interpretations; for example, from \eqref{tnbcformula}, it is easy to see that $T_n(b,c)$ counts the colored lattice paths from $(0,0)$ to $(n,0)$ using only steps $U=(1,1)$, $D=(1,-1)$ and $H=(1,0)$, where $H$ and $D$ may have $b$ and $c$ colors, respectively. Moreover, $T_n(b,c)$ is a natural extension of some well-known combinatorial sequences; for instance, $T_n:=T_n(1,1)$ is the central trinomial coefficient and $T_n(2,1)$ is exactly the central binomial coefficient $\binom{2n}{n}$. The generalized central trinomial coefficients are also related to the well-known Legendre polynomials
\begin{equation}\label{Legendrepoly}
P_n(x):=\sum_{k=0}^n\binom{n}{k}\binom{n+k}{k}\l(\f{x-1}2\r)^k=\sum_{k=0}^n\f{\binom{n}{k}\binom{2k}{k}}{2^k}(x^2-1)^{k/2}\l(x-\sqrt{x^2-1}\r)^{n-k}
\end{equation}
(cf. \cite[p. 38]{Gould}) via the following identity (see \cite{Noe,Sun2014a,Sun2014b}):
\begin{equation}\label{tntoLegendre}
T_n(b,c)=(\sqrt{d})^n P_n\l(\f{b}{\sqrt{d}}\r),
\end{equation}
where $d=b^2-4c$.

It is known that sums involving products of the binomial coefficients (e.g., $\binom{2k}{k}$, $\binom{2k}{k}^2$, $\binom{2k}{k}\binom{3k}{k}$, $\binom{2k}{k}^3$) usually have some interesting congruence properties. Since $T_n(b,c)$ is a natural extension of $\binom{2n}{n}$, Z.-W. Sun \cite{Sun2014a,Sun2014b} investigated congruences for sums involving the generalized central trinomial coefficients systematically. In particular, Sun \cite[Theorem 2.1]{Sun2014a} determined
$$
\sum_{k=0}^{p-1}\f{\binom{2k}{k}T_k(b,c)}{m^k}\pmod{p}
$$
for any $b,c,m\in\Z$ and odd prime $p$ with $p\nmid m$. As a corollary, he obtained that
\begin{equation}\label{sunres}
\sum_{k=0}^{p-1}\f{\binom{2k}{k}T_k}{12^k}\eq\l(\f{6}{p}\r)\sum_{k=0}^{p-1}\f{\binom{4k}{2k}\binom{2k}{k}}{64^k}\eq\l(\f{p}{3}\r)\pmod{p},
\end{equation}
where $\l(-\r)$ denotes the Legendre symbol. For more congruence properties of the generalized central trinomial coefficients, one may consult \cite{ChenWang, Guo2015,GuoZeng,Liu,Sun2014a,Sun2014b,WangXia2021,Zhangyong}.

For any $n\in\N$ and $x\in\C$, define the polynomial sequence $w_n(x)$ as in \cite{HHT}, i.e.,
$$
w_n(x):=\begin{cases}\f{(\alpha+1)\alpha^n-(\alpha^{-1}+1)\alpha^{-n}}{\alpha-\alpha^{-1}},\quad&\t{if}\ \ x\neq\pm1,\\ 2n+1,\quad&\t{if}\ \ x=1,\\ (-1)^n,\quad&\t{if}\ \ x=-1,\end{cases}
$$
where $\alpha=x+\sqrt{x^2-1}$.

The first purpose of this paper is to establish the following parametric congruence as a generalization of \eqref{sunres}.

\begin{theorem}\label{mainth1}
Let $p$ be an odd prime and let $b,c\in\Z$ with $p\nmid c(b+2c)$. Then
\begin{equation}\label{wangsunexten}
\sum_{k=0}^{p-1}\f{\binom{2k}{k}T_k(b,c^2)}{4^k(b+2c)^k}\eq w_{(p-1)/2}\l(\f{b-6c}{b+2c}\r)\pmod{p^2}.
\end{equation}
\end{theorem}

Taking $(b,c)=(1,1)$ and $(2,1)$, we obtain the following results.
\begin{corollary}\label{cor1}{\rm (i)} For any prime $p>3$, we have
\begin{equation}\label{sunconj}
\sum_{k=0}^{p-1}\f{\binom{2k}{k}T_k}{12^k}\eq\l(\f{p}{3}\r)\f{3^{p-1}+3}{4}\pmod{p^2}.
\end{equation}

{\rm (ii)} For any odd prime $p$, we have
\begin{equation}\label{RVcon}
\sum_{k=0}^{p-1}\f{\binom{2k}{k}^2}{16^k}\eq\l(\f{-1}{p}\r)\pmod{p^2}.
\end{equation}
\end{corollary}

\begin{remark}
{\rm (a)} \eqref{sunconj} confirms a conjecture of Sun \cite[Conjecture 2.1]{Sun2014a}. In 2019, Sun \cite[Conjecture 66]{Sun2019} further conjectured that for any prime $p>3$ and $n\in\Z^{+}$ one has
$$
\f{1}{pn\binom{2n}{n}}\l(\sum_{k=0}^{pn-1}\f{\binom{2k}{k}T_k}{12^k}-\l(\f{p}{3}\r)\sum_{r=0}^{n-1}\f{\binom{2r}{r}T_r}{12^r}\r)\eq \l(\f{p}{3}\r)\f{3^{p-1}-1}{8p}\cdot\f{T_{n-1}}{12^{n-1}}\pmod{p}.
$$
Clearly, when $n=1$, the above conjecture coincides with \eqref{sunconj}. Recently, this conjecture was completely confirmed by Zhang \cite{Zhangyong}.

{\rm(b)} \eqref{RVcon} was conjectured by Rodriguez-Villegas \cite{RVillegas03} and first confirmed by Mortenson \cite{Mortenson03}.
\end{remark}

Our next theorem also concerns sums of the type $\sum_{k=0}^{p-1}\binom{2k}{k}T_k(b,c)/m^k$.

\begin{theorem}\label{mainth2}
Let $p$ be an odd prime and let $b,c\in\Z$ with $p\nmid b$. Then
\begin{equation}\label{mainth2eq1}
\sum_{k=0}^{p-1}\f{\binom{2k}{k}T_k(b,c)}{(4b)^k}\eq p\sum_{k=0}^{(p-1)/2}\f{\binom{2k}{k}}{4k+1}\l(\f{c}{b^2}\r)^k\pmod{p^2}.
\end{equation}
In particular, for any odd prime $p$ we have
\begin{equation}\label{mainth2eq2}
\sum_{k=0}^{p-1}\f{\binom{2k}{k}T_k(2,-1)}{8^k}\eq\begin{cases}2x-p/(2x)\pmod{p^2}\ &if\ p=x^2+4y^2\ \&\ x\eq1\pmod{4},\vspace{2mm}\\
0\pmod{p^2}\ &if\ p\eq3\pmod{4}.\end{cases}
\end{equation}
\end{theorem}

In 2011, Z.-W. Sun \cite[Conjecture 5.5]{Sun2011} conjectured that for any prime $p=x^2+4y^2\eq1\pmod{4}$ with $x\eq1\pmod{4}$ we have
\begin{equation}\label{sunbinomial1}
\sum_{k=0}^{p-1}\f{\binom{2k}{k}^2}{8^k}\eq(-1)^{(p-1)/4}\l(2x-\f{p}{2x}\r)\pmod{p^2}.
\end{equation}
This was later confirmed by Z.-H. Sun \cite{ZHSun}. For any prime $p\eq3\pmod{4}$, Z.-W. Sun \cite{Sun2013} proved that
\begin{equation}\label{sunbinomial2}
\sum_{k=0}^{p-1}\f{\binom{2k}{k}^2}{8^k}\eq\f{(-1)^{(p+1)/4}2p}{\binom{(p+1)/2}{(p+1)/4}}\pmod{p^2}.
\end{equation}

Taking $(b,c)=(2,1)$ in \eqref{mainth2eq1}, and noting \eqref{sunbinomial1} and \eqref{sunbinomial2}, we arrive at the following result.

\begin{corollary}\label{cor2}
Let $p$ be an odd prime. Then
\begin{align*}
&p\sum_{k=0}^{(p-1)/2}\f{\binom{2k}{k}}{(4k+1)4^k}\\
\eq&\begin{cases}(-1)^{(p-1)/4}\l(2x-\f{p}{2x}\r)\pmod{p^2} \ &if\ p=x^2+4y^2\ \&\ x\eq1\pmod{4},\vspace{2mm}\\
(-1)^{(p+1)/4}2p/\binom{(p+1)/2}{(p+1)/4}\pmod{p^2}\ &if\ p\eq3\pmod{4}. \end{cases}
\end{align*}
\end{corollary}

\begin{remark}
Corollary \ref{cor2} can also be directly proved through a similar argument as the one in the proof of \eqref{mainth2eq2} and using the Gauss identity (cf. \cite[p. 66]{AAR99}).
\end{remark}

To state our next theorem, we need to introduce some notations. For $A,B\in\Z$, the Lucas sequences $u_n=u_n(A,B)\ (n\in\N)$ are defined as follows:
$$
u_0=0,\ u_1=1,\ \t{and}\ u_{n+1}=Au_n-Bu_{n-1}\ (n=1,2,3,\ldots).
$$ 
The characteristic equation $x^2-Ax+B=0$ has two roots
$$
\alpha=\f{A+\sqrt{\Delta}}{2}\quad \t{and} \quad \beta=\f{A-\sqrt{\Delta}}{2},
$$
where $\Delta=A^2-4B$. It is known that for any $n\in\N$ we have
$$
u_n=\begin{cases}(\alpha^n-\beta^n)/(\alpha-\beta)\quad&\t{if}\ \Delta\neq0,\\ n(A/2)^{n-1}\quad&\t{if}\ \Delta=0.\end{cases}
$$
It is known (see, e.g., \cite{Sun2012}) that $u_{p-(\f{\Delta}{p})}\eq0\pmod{p}$ for any prime $p\mid 2B$. For $n\in\N$, the $n$th harmonic number is defined by $H_n:=\sum_{k=1}^n1/k$. For any $p$-adic integer $x$, let $\<x\>_{p^r}$ denote the least nonnegative residue of $x$ modulo $p^r$.

\begin{theorem}\label{mainth3}
Let $p$ be an odd prime and let $b,c\in\Z$ with $p\nmid c(b+2c)$. Set $s=(b-2c)/(2c),\ t=\<2s+4\>_{p^2}$ and $d=b^2-4c^2$. Then
\begin{align}\label{mainth3eq1}
&\sum_{k=0}^{p-1}\f{T_k(b,c^2)H_k}{(b+2c)^k}\notag\\
\eq& s+2-s\l(\f{d}{p}\r)(t-4)^{p-1}-2p\l(s+1-s\l(\f{d}{p}\r)-\l(\f{d}{p}\r)^2\f{s(s+1)u_{p-(\f d p)}(t-2,1)}{2p}\r)\notag\\
&+s\begin{cases}(t-4)\pmod{p^2}\quad&if\ p=3\ and\ 3\mid t-1,\\ 0\pmod{p^2}\quad&otherwise.\end{cases}
\end{align}
In particular, for any prime $p>3$ we have
\begin{equation}\label{mainth3eq2}
\sum_{k=0}^{p-1}\f{T_kH_k}{3^k}\eq\f{3+\l(\f{p}{3}\r)}{2}-p\l(1+\l(\f{p}{3}\r)\r)\pmod{p^2}.
\end{equation}
\end{theorem}

\begin{remark}
\eqref{mainth3eq2} was originally conjectured by Z.-W. Sun \cite[Conjecture 1.1 (ii)]{Sun2014b}.
\end{remark}

Putting $b=2,\ c=1$ in \eqref{mainth3eq1}, we obtain the following result concerning central binomial coefficients.

\begin{corollary}
For any odd prime $p$ we have
$$
\sum_{k=0}^{p-1}\f{\binom{2k}{k}H_k}{4^k}\eq 2-2p\pmod{p^2}.
$$
\end{corollary}

The proofs of Theorems \ref{mainth1}--\ref{mainth3} will be given in Sections 2--4, respectively.

\medskip

\section{Proof of Theorem \ref{mainth1}}

In order to show Theorem \ref{mainth1}, we need the following transformation of $T_n(b,c^2)$ which follows from \eqref{tntoLegendre} and \cite[(3.136)]{Gould}.

\begin{lemma}\label{mainth1lem1}
For $n\in\N$ and $b,c\in\Z$ we have
\begin{equation}\label{mainth1lem1eq}
T_n(b,c^2)=\sum_{k=0}^n\binom{n}{k}\binom{2k}{k}(b+2c)^{n-k}(-c)^k.
\end{equation}
\end{lemma}

\begin{proof}
Denote the right-hand side of \eqref{mainth1lem1eq} by $a_n$. Via the Zeilberger algorithm (cf. \cite{PWZ}), we find $T_n(b,c^2)$ and $a_n$ all satisfy the following recurrence relation:
$$
-(b-2c)(b+2c)(n+1)a_n+b(2n+3)a_{n+1}-(n+2)a_{n+2}=0.
$$
Moreover, it is easy to see that $T_0(b,c^2)=a_0$ and $T_1(b,c^2)=a_1$. Then the proof is finished by induction on $n$.
\end{proof}

\begin{lemma}\label{lemma2.1}
Let $n,j\in\N$. Then we have
\begin{equation}\label{lem2.1}
\sum_{k=j}^n\f{\binom{2k}{k}\binom{k}{j}}{4^k}=\f{n+1}{2^{2n+1}(2j+1)}\cdot\binom{n}{j}\binom{2n+2}{n+1}.
\end{equation}
\end{lemma}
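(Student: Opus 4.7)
The plan is to prove \eqref{lem2.1} by induction on $n\gs j$, with $j$ held fixed throughout. Let $f(n,j)$ denote the right-hand side of \eqref{lem2.1}. Since both sides are symmetric-looking in the sense that incrementing $n$ by one changes the left-hand side by a single term, this setup should be very clean.

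For the base case $n=j$, the left-hand side collapses to $\binom{2j}{j}/4^j$, while using the well-known relation
$$\binom{2j+2}{j+1}=\f{2(2j+1)}{j+1}\binom{2j}{j}$$
the right-hand side $f(j,j)$ reduces to the same thing. So the base case is a one-line computation.

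For the inductive step, assuming \eqref{lem2.1} for $n$, I need to verify
$$f(n+1,j)-f(n,j)=\f{\binom{2n+2}{n+1}\binom{n+1}{j}}{4^{n+1}}.$$
Using $\binom{2n+4}{n+2}=\f{2(2n+3)}{n+2}\binom{2n+2}{n+1}$ I can simplify $f(n+1,j)$ to
$$\f{2n+3}{2^{2n+2}(2j+1)}\binom{n+1}{j}\binom{2n+2}{n+1},$$
and using $\binom{n}{j}=\f{n+1-j}{n+1}\binom{n+1}{j}$ I can rewrite $f(n,j)$ with a $\binom{n+1}{j}$ factor as well. The factors $\binom{n+1}{j}\binom{2n+2}{n+1}$ then pull out, leaving the bracketed scalar $[(2n+3)-2(n+1-j)]/[2^{2n+2}(2j+1)]=(2j+1)/[2^{2n+2}(2j+1)]=1/4^{n+1}$. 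The $(2j+1)$ cancels miraculously, which is exactly what we need for the identity to hold.

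I do not foresee any real obstacle here: once the induction is set up, the verification is just careful bookkeeping with factorials. An alternative route would be to recognize \eqref{lem2.1} as an instance of a Gosper-summable / Zeilberger-type identity and invoke the WZ machinery, but the direct induction above is self-contained and short, so I would favor it.
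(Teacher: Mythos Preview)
Your proposal is correct and follows exactly the approach indicated in the paper, which simply states that the identity ``could be directly verified by induction on $n$'' without supplying details. Your base case and inductive step computations are both accurate (in particular the cancellation of the factor $2j+1$ in the difference $f(n+1,j)-f(n,j)$ is precisely what makes the telescoping work), and the omitted case $n<j$ is trivial since $\binom{n}{j}=0$ then.
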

\begin{proof}
This could be directly verified by induction on $n$.
\end{proof}

In \cite[Theorem 2]{HHT}, Kh. Hessami Pilehrood, T. Hessami Pilehrood and R. Tauraso completely determined
$$
\sum_{k=0}^{(p-3)/2}\f{\binom{2k}{k}t^k}{2k+1}\pmod{p^3},
$$
where $p$ is an odd prime and $t$ is a $p$-adic unit. In our proof, we only need to use their result in the modulus $p$ case.

\begin{lemma}[cf. {\cite[Theorem 2]{HHT}}]\label{pptres}
For any odd prime $p$ and $t\in\Z_p^{\times}$, we have
\begin{equation}\label{pptreseq}
\sum_{k=0}^{(p-3)/2}\f{\binom{2k}{k}t^k}{2k+1}\eq \f{w_{(p-1)/2}(1-8t)-(-16t)^{(p-1)/2}}{p}\pmod{p}.
\end{equation}
\end{lemma}

\medskip

\noindent{\it Proof of Theorem \ref{mainth1}}. By Lemmas \ref{mainth1lem1} and \ref{lemma2.1}, we have
\begin{align*}
\sum_{k=0}^{p-1}\f{\binom{2k}{k}T_k(b,c^2)}{4^k(b+2c)^k}=&\sum_{k=0}^{p-1}\f{\binom{2k}{k}}{4^k}\sum_{l=0}^k\binom{k}{l}\binom{2l}{l}\l(\f{-c}{b+2c}\r)^l\\
=&\sum_{l=0}^{p-1}\binom{2l}{l}\l(\f{-c}{b+2c}\r)^l\sum_{k=l}^{p-1}\f{\binom{2k}{k}\binom{k}{l}}{4^k}\\
=&\f{p\binom{2p}{p}}{2^{2p-1}}\sum_{l=0}^{p-1}\f{\binom{2l}{l}\binom{p-1}{l}\big(\f{-c}{b+2c}\big)^l}{2l+1}.
\end{align*}
Note that $\binom{2l}{l}/(2l+1)\eq0\pmod{p}$ for $l\in\{(p+1)/2,\ldots,p-1\}$ and $\binom{p-1}{l}\eq (-1)^l\pmod{p}$ for $0\leq l<p$. Thus we have
\begin{equation}\label{mainth1key1}
\sum_{k=0}^{p-1}\f{\binom{2k}{k}T_k(b,c^2)}{4^k(b+2c)^k}\eq \f{\binom{2p}{p}\binom{p-1}{(p-1)/2}^2\big(\f{-c}{b+2c}\big)^{(p-1)/2}}{2^{2p-1}}+\f{p\binom{2p}{p}}{2^{2p-1}}\sum_{l=0}^{(p-3)/2}\f{\binom{2l}{l}\big(\f{c}{b+2c}\big)^l}{2l+1}\pmod{p^2}.
\end{equation}
Wolstenholme \cite{Wolsten} showed that for all primes $p>3$
\begin{equation}\label{wcon}
\binom{2p}{p}\eq2\pmod{p^3}.
\end{equation}
Morley's congruence (cf. \cite{Mo}) states that for any prime $p>3$ we have
\begin{equation}\label{mcon}
\binom{p-1}{(p-1)/2}\eq(-1)^{(p-1)/2}4^{p-1}\pmod{p^3}.
\end{equation}
It can be verified directly that \eqref{wcon} and \eqref{mcon} also hold modulo $p^2$ for $p=3$. Substituting \eqref{wcon} and \eqref{mcon} into \eqref{mainth1key1}, we arrive at
$$
\sum_{k=0}^{p-1}\f{\binom{2k}{k}T_k(b,c^2)}{4^k(b+2c)^k}\eq \l(\f{-16c}{b+2c}\r)^{(p-1)/2}+\f{p}{4^{p-1}}\sum_{l=0}^{(p-3)/2}\f{\binom{2l}{l}\big(\f{c}{b+2c}\big)^l}{2l+1}\pmod{p^2}.
$$
Then we complete the proof by Lemma \ref{pptres} and Fermat's little theorem.\qed

Below we give a short proof of Corollary \ref{cor1}.

\medskip

\noindent{\it Proof of Corollary \ref{cor1}}. \eqref{RVcon} is obvious. To show \eqref{sunconj}, it remains to prove
\begin{equation}\label{cor1key}
w_{(p-1)/2}\l(-\f53\r)\eq\l(\f{p}{3}\r)\f{3^{p-1}+3}{4}\pmod{p^2}.
\end{equation}
It is easy to see that
$$
w_{(p-1)/2}\l(-\f53\r)=\f{\l(-1\r)^{(p-1)/2}}{4}\l((1/3)^{(p-1)/2}+3\times3^{(p-1)/2}\r).
$$
From \cite[p. 51]{IR}, we know that $a^{(p-1)/2}\eq(\f{a}{p})\pmod{p}$ for any integer $a\not\eq0\pmod{p}$. Thus we may write $3^{(p-1)/2}$ as $(\f{3}{p})(1+ph)$, where $h$ is a $p$-adic integer. In view of this,
$$
3^{p-1}=(3^{(p-1)/2})^2\eq1+2p h\pmod{p^2}.
$$
By the above and with the help of the law of quadratic reciprocity (cf. \cite{IR}), we get
\begin{align*}
w_{(p-1)/2}\l(-\f53\r)=&\f{\l(-1\r)^{(p-1)/2}}{4}\l(\f{1}{(\f3p)(1+ph)}+3\l(\f3p\r)(1+ph)\r)\\
\eq&\f{\l(-1\r)^{(p-1)/2}}{4}\l(\f3p\r)(4+2ph)\\
\eq&\f{3^{p-1}+1}{4}\l(\f3p\r)\l(\f{-1}{p}\r)\\
=&\l(\f{p}{3}\r)\f{3^{p-1}+3}{4}\pmod{p^2}.
\end{align*}
This proves \eqref{cor1key}.\qed

\medskip

\section{Proof of Theorem \ref{mainth2}}

Recall that Morita's $p$-adic Gamma function $\Gamma_p$ (cf. \cite{Robert00}) is the $p$-adic analogue of the classical Gamma function $\Gamma$. For $n\in\N$ define $\Gamma_p(0):=1$ and for $n\geq1$
$$
\Gamma_p(n):=(-1)^n\prod_{\substack{1\leq k<n\\ p\nmid k}}k.
$$
Let $\Z_p$ denote the ring of all $p$-adic integers. As we all know, the definition of $\Gamma_p$ can be extended to $\Z_p$ since $\N$ is a dense subset of $\Z_p$ with respect to $p$-adic norm, where $\Z_p$ denotes the ring of $p$-adic integers. It follows that
\begin{equation*}
\f{\Gamma_p(x+1)}{\Gamma_p(x)}=\begin{cases}\displaystyle -x,\quad&\t{if $p\nmid x$},\vspace{2mm}\\ \displaystyle -1,\quad&\t{if $p\mid x$}.\end{cases}
\end{equation*}
It is known (cf. \cite[p. 369]{Robert00}) that for any odd prime $p$ and $x\in\Z_p$ we have
\begin{equation}\label{legendre}
\Gamma_p(x)\Gamma_p(1-x)=(-1)^{\<-x\>_p-1},
\end{equation}
where $\<x\>_p$ is the least nonnegative residue of $x$ modulo $p$. It is also known (cf. \cite{WangPan}) that for any odd prime $p$ and $\alpha,t\in\Z_p$ we have
\begin{equation}\label{padicexpansion}
\Gamma_p(\alpha+tp)\eq\Gamma_p(\alpha)\l(1+tp(\Gamma_p'(0)+H_{p-1-\<-\alpha\>_p})\r)\pmod{p^2}.
\end{equation}
For more properties of $p$-adic gamma functions, one may consult \cite{Robert00}.

The following identity is due to Kummer.
\begin{lemma}[{cf. \cite[p. 126]{AAR99}}]\label{Kummer}
For any $a,b\in\C$ such that the series converges, we have
$$
\sum_{k=0}^{\infty}\f{(a)_k(b)_k(-1)^k}{(a-b+1)_k}=\f{\Gamma(a-b+1)\Gamma(\f a2+1)}{\Gamma(a+1)\Gamma(\f a2-b+1)},
$$
where $(x)_k=x(x+1)\cdots(x+k-1)$ is the Pochhammer symbol.
\end{lemma}

\begin{lemma}\label{keylem} For any odd prime $p$ we have
$$
p\sum_{k=0}^{(p-1)/2}\f{\binom{2k}{k}}{(-4)^k(4k+1)}\eq\begin{cases}\displaystyle2x-\f{p}{2x}\pmod{p^2}\ &if\ p=x^2+4y^2\ \&\ x\eq1\pmod{4},\vspace{2mm}\\
\displaystyle0\ &if\ p\eq3\pmod{4}.\end{cases}
$$
\end{lemma}

\begin{proof}
Clearly,
$$
p\sum_{k=0}^{(p-1)/2}\f{\binom{2k}{k}}{(-4)^k(4k+1)}=p\sum_{k=0}^{(p-1)/2}\f{(\f12)_k(\f14)_k(-1)^k}{(1)_k(\f54)_k}.
$$

We first assume that $p\eq1\pmod{4}$. In this case, $\ord_p(p/(5/4)_k)\geq0$ for all $k$ among $0,1,\ldots,(p-1)/2$. It is easy to check that
\begin{align*}
&p\sum_{k=0}^{(p-1)/2}\f{(\f{1-p}{2})_k(\f{1-2p}{4})_k(-1)^k}{(1)_k(\f54)_k}\\
&\qquad\eq p\sum_{k=0}^{(p-1)/2}\f{(\f12)_k(\f14)_k(-1)^k}{(1)_k(\f54)_k}\l(1-\f{p}2\sum_{j=0}^{k-1}\f{1}{1/2+j}-\f{p}{2}\sum_{j=0}^{k-1}\f{1}{1/4+j}\r)\pmod{p^2}
\end{align*}
and
\begin{align*}
&p\sum_{k=0}^{(p-1)/2}\f{(\f{2-p}{4})_k(\f{1-p}{4})_k(-1)^k}{(1)_k(\f54)_k}\\
&\qquad\eq p\sum_{k=0}^{(p-1)/2}\f{(\f12)_k(\f14)_k(-1)^k}{(1)_k(\f54)_k}\l(1-\f{p}4\sum_{j=0}^{k-1}\f{1}{1/2+j}-\f{p}{4}\sum_{j=0}^{k-1}\f{1}{1/4+j}\r)\pmod{p^2}.
\end{align*}
On the other hand, by Lemma \ref{Kummer} we have
\begin{align*}
&\sum_{k=0}^{(p-1)/2}\f{(\f{1-p}{2})_k(\f{1-2p}{4})_k(-1)^k}{(1)_k(\f54)_k}=\lim_{t\to1}\sum_{k=0}^{\infty}\f{(\f{1-tp}{2})_k(\f{1-2tp}{4})_k(-1)^k}{(1)_k(\f54)_k}\\
=&\lim_{t\to1}\f{\Gamma(\f54)\Gamma(\f{5-tp}4)}{\Gamma(\f{3-tp}2)\Gamma(\f{4+tp}4)}=\f{\Gamma(\f54)\Gamma(\f{p-1}2)}{\Gamma(\f{p-1}4)\Gamma(\f{4+p}4)}\lim_{t\to1}\f{\sin(\f{3-tp}2\pi)}{\sin(\f{5-tp}4\pi)}=(-1)^{(p-1)/4}\f{2\Gamma(\f54)\Gamma(\f{p-1}2)}{\Gamma(\f{p-1}4)\Gamma(\f{4+p}4)},
\end{align*}
where we note that $\Gamma(x)\Gamma(1-x)=\pi/\sin(\pi x)$.
Also,
$$
\sum_{k=0}^{(p-1)/2}\f{(\f{2-p}{4})_k(\f{1-p}{4})_k(-1)^k}{(1)_k(\f54)_k}=\f{\Gamma(\f54)\Gamma(\f{10-p}8)}{\Gamma(\f{6-p}4)\Gamma(\f{8+p}8)}.
$$
Combining the above we obtain
$$
p\sum_{k=0}^{(p-1)/2}\f{(\f12)_k(\f14)_k(-1)^k}{(1)_k(\f54)_k}\eq\sigma_1-\sigma_2\pmod{p^2},
$$
where
$$
\sigma_1:=\f{2p\Gamma(\f54)\Gamma(\f{10-p}8)}{\Gamma(\f{6-p}4)\Gamma(\f{8+p}8)}\quad\t{and}\quad\sigma_2:=(-1)^{(p-1)/4}\f{2p\Gamma(\f54)\Gamma(\f{p-1}2)}{\Gamma(\f{p-1}4)\Gamma(\f{4+p}4)}.
$$
From \cite{L} we know that for any odd prime $p$ we have
$$
H_{\lfloor p/2\rfloor}\eq-2q_p(2)\pmod{p}\quad\t{and}\quad H_{\lfloor p/4\rfloor}\eq-3q_p(2)\pmod{p},
$$
where $q_p(2)=(2^{p-1}-1)/p$ is the Fermat quotient. It is easy to see that $\Gamma((10-p)/8)/\Gamma((8+p)/8)$ contains the factor $8/p$. Thus by \eqref{legendre} and \eqref{padicexpansion} we have
\begin{align*}
\sigma_1=&\f{16\Gamma_p(\f54)\Gamma_p(\f54-\f{p}8)}{\Gamma_p(\f32-\f{p}4)\Gamma_p(1+\f{p}8)}\\
\eq&-\f{16\Gamma_p(\f54)^2}{\Gamma_p(\f32)}\l(1-\f{p}8H_{\lfloor p/4\rfloor}+\f{p}4H_{\lfloor p/2\rfloor}\r)\\
\eq&-2\Gamma_p\l(\f14\r)^2\Gamma_p\l(\f12\r)\l(1-\f{p}8q_p(2)\r)\pmod{p^2}.
\end{align*}
Similarly, it is not hard to find that $\Gamma(5/4)/\Gamma((4+p)/4)$ contains the factor $4/p$. Thus, by \eqref{legendre} and \eqref{padicexpansion} we arrive at
\begin{align*}
\sigma_2=&(-1)^{(p-1)/4}\f{8\Gamma_p(\f54)\Gamma_p(\f{p-1}{2})}{\Gamma_p(\f{p-1}4)\Gamma_p(\f{4+p}4)}\\
\eq&(-1)^{(p+3)/4}\f{8\Gamma_p(\f54)\Gamma_p(-\f12)}{\Gamma_p(-\f14)}\l(1+\f{p}2H_{\lfloor p/2\rfloor}-\f{p}4H_{\lfloor p/4\rfloor}\r)\\
\eq&-\Gamma_p\l(\f14\r)^2\Gamma_p\l(\f12\r)\l(1-\f{p}4q_p(2)\r)\pmod{p^2}.
\end{align*}
In view of the above we have
$$
p\sum_{k=0}^{(p-1)/2}\f{(\f12)_k(\f14)_k(-1)^k}{(1)_k(\f54)_k}\eq-\Gamma_p\l(\f14\r)^2\Gamma_p\l(\f12\r)\pmod{p^2}.
$$
From \cite{CDE}, for $p=x^2+4y^2$ with $x\eq1\pmod{4}$ we have
$$
-\Gamma_p\l(\f14\r)^2\Gamma_p\l(\f12\r)\eq2x-\f{p}{2x}\pmod{p^2}.
$$
This proves the lemma in the case $p\eq1\pmod{4}$.

Below we suppose that $p\eq3\pmod{4}$. In this case, $\ord_p(4j+1)=0$ for all $k$ among $0,1,\ldots,(p-1)/2$. Therefore, by Lemma \ref{Kummer} we have
\begin{align*}
p\sum_{k=0}^{(p-1)/2}\f{(\f12)_k(\f14)_k(-1)^k}{(1)_k(\f54)_k}\eq& p\sum_{k=0}^{(p-1)/2}\f{(\f{1-p}2)_k(\f{1-2p}4)_k(-1)^k}{(1)_k(\f54)_k}\\
=&\f{p\Gamma(\f54)\Gamma(\f{5-p}4)}{\Gamma(\f{3-p}2)\Gamma(\f{4+p}4)}=0\pmod{p^2},
\end{align*}
where in the last step we have used the fact $1/\Gamma(-n)=0$ for any nonnegative integer $n$.

The proof of Lemma \ref{keylem} is now complete.
\end{proof}

\medskip

\noindent{\it Proof of Theorem \ref{mainth2}}. In view of \eqref{tnbcformula} and Lemma \ref{lemma2.1}, we have
\begin{align*}
\sum_{k=0}^{p-1}\f{\binom{2k}{k}T_k(b,c)}{(4b)^k}=&\sum_{k=0}^{p-1}\f{\binom{2k}{k}}{4^k}\sum_{j=0}^{\lfloor k/2\rfloor}\binom{k}{2j}\binom{2j}{j}\l(\f{c}{b^2}\r)^j\\
=&\sum_{j=0}^{(p-1)/2}\binom{2j}{j}\l(\f{c}{b^2}\r)^j\sum_{k=2j}^{p-1}\f{\binom{2k}{k}\binom{k}{2j}}{4^k}\\
=&\f{p\binom{2p}{p}}{2^{2p-1}}\sum_{j=0}^{(p-1)/2}\f{\binom{2j}{j}\binom{p-1}{2j}}{4j+1}\l(\f{c}{b^2}\r)^j.
\end{align*}
If $p\eq3\pmod{4}$, then $p\nmid(4j+1)$ for all $j$ among $0,1,\ldots,(p-1)/2$. In this case, by \eqref{wcon} and Fermat's little theorem, we have
$$
\sum_{k=0}^{p-1}\f{\binom{2k}{k}T_k(b,c)}{(4b)^k}\eq p\sum_{j=0}^{(p-1)/2}\f{\binom{2j}{j}}{4j+1}\l(\f{c}{b^2}\r)^j\pmod{p^2}.
$$
Now suppose $p\eq1\pmod{4}$. Then, by \eqref{wcon} and \eqref{mcon},
\begin{align*}
\sum_{k=0}^{p-1}\f{\binom{2k}{k}T_k(b,c)}{(4b)^k}=&\f{p\binom{2p}{p}}{2^{2p-1}}\sum_{\substack{0\leq j\leq (p-1)/2\\ j\neq (p-1)/4}}\f{\binom{2j}{j}\binom{p-1}{2j}}{4j+1}\l(\f{c}{b^2}\r)^j+\f{\binom{2p}{p}\binom{(p-1)/2}{(p-1)/4}\binom{p-1}{(p-1)/2}}{2^{2p-1}}\l(\f{c}{b^2}\r)^{(p-1)/4}\\
\eq& p\sum_{\substack{0\leq j\leq (p-1)/2\\ j\neq (p-1)/4}}\f{\binom{2j}{j}}{4j+1}\l(\f{c}{b^2}\r)^j+\binom{(p-1)/2}{(p-1)/4}\l(\f{c}{b^2}\r)^{(p-1)/4}\\
=&p\sum_{j=0}^{(p-1)/2}\f{\binom{2j}{j}}{4j+1}\l(\f{c}{b^2}\r)^j\pmod{p^2}.
\end{align*}

Combining the above, we conclude the proof of \eqref{mainth2eq1}.

In particular, for any odd prime $p$ we have
$$
\sum_{k=0}^{p-1}\f{\binom{2k}{k}T_k(2,-1)}{8^k}\eq p\sum_{k=0}^{(p-1)/2}\f{\binom{2k}{k}}{(-4)^k(4k+1)}\pmod{p^2}.
$$
Then \eqref{mainth2eq2} follows from Lemma \ref{keylem} immediately.\qed

\medskip

\section{Proof of Theorem \ref{mainth3}}

The following identity can be verified by induction on $n$.
\begin{lemma}\label{lemma3.1} Let $n,j$ be nonnegative integers. Then we have
\begin{equation}\label{lem3.1}
\sum_{k=j}^n\binom{k}{j}H_k=\binom{n+1}{j+1}\l(H_{n+1}-\f{1}{j+1}\r).
\end{equation}
\end{lemma}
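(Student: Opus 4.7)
The plan is to prove the identity by straightforward induction on $n$ (with $j$ held fixed), as the paper suggests. For the base case $n=j$, the left-hand side collapses to $\binom{j}{j}H_j = H_j$, while the right-hand side is $\binom{j+1}{j+1}(H_{j+1}-1/(j+1)) = H_{j+1} - 1/(j+1) = H_j$, so equality holds.

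For the inductive step, assume \eqref{lem3.1} holds for some $n\ge j$. Splitting off the $k=n+1$ summand and applying the induction hypothesis,
\begin{align*}
\sum_{k=j}^{n+1}\binom{k}{j}H_k
&=\binom{n+1}{j}H_{n+1}+\binom{n+1}{j+1}\l(H_{n+1}-\f{1}{j+1}\r)\\
&=\l(\binom{n+1}{j}+\binom{n+1}{j+1}\r)H_{n+1}-\f{\binom{n+1}{j+1}}{j+1}\\
&=\binom{n+2}{j+1}H_{n+1}-\f{\binom{n+1}{j+1}}{j+1},
\end{align*}
where the last step uses Pascal's rule. On the other hand, expanding $H_{n+2}=H_{n+1}+1/(n+2)$, the target right-hand side for $n+1$ reads
$$
\binom{n+2}{j+1}\l(H_{n+2}-\f{1}{j+1}\r)=\binom{n+2}{j+1}H_{n+1}+\f{\binom{n+2}{j+1}}{n+2}-\f{\binom{n+2}{j+1}}{j+1}.
$$

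Matching the two expressions reduces the induction step to the single identity
$$
\f{\binom{n+2}{j+1}}{n+2}-\f{\binom{n+2}{j+1}}{j+1}=-\f{\binom{n+1}{j+1}}{j+1},
$$
which in turn follows from the absorption formula $\binom{n+2}{j+1}/(n+2)=\binom{n+1}{j}/(j+1)$ together with Pascal's identity $\binom{n+2}{j+1}-\binom{n+1}{j+1}=\binom{n+1}{j}$. There is no real obstacle here: the argument is routine bookkeeping on top of Pascal's rule and absorption. (As an alternative one could use Abel summation, writing $\binom{k}{j}=\binom{k+1}{j+1}-\binom{k}{j+1}$ and telescoping, but the induction is cleaner and matches the paper's stated approach.)
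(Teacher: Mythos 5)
Your proof is correct and takes exactly the approach the paper indicates: the paper merely states that the identity ``can be verified by induction'' without writing out the details, and your induction on $n$ (base case $n=j$, then Pascal's rule plus the absorption identity $(j+1)\binom{n+2}{j+1}=(n+2)\binom{n+1}{j}$ in the inductive step) supplies precisely those details. The only cosmetic omission is the trivial case $n<j$, where both sides vanish since the sum is empty and $\binom{n+1}{j+1}=0$.
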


\begin{lemma}\label{mainth3lem2}
For any $n\in\N$, we have
\begin{equation}\label{mainth3lem2eq'}
\sum_{k=0}^n\f{(-(4x+1)k^2+(4nx-2x-2)k+2nx-1)\binom{n}{k}\binom{2k}{k}x^k}{(k+1)^2}=-1,
\end{equation}
or equivalently,
\begin{equation}\label{mainth3lem2eq}
-2(n+1)x\sum_{k=0}^n\f{\binom{n}{k}\binom{2k}{k}x^k}{(k+1)^2}+(4n+6)x\sum_{k=0}^n\f{\binom{n}{k}\binom{2k}{k}x^k}{k+1}-(4x+1)\sum_{k=0}^n\binom{n}{k}\binom{2k}{k}x^k=-1.
\end{equation}
\end{lemma}

\begin{proof}
The identity \eqref{mainth3lem2eq'} is directly obtained by using Zeilberger's algorithm. 
\end{proof}

Taking $a=1$ in \cite[(1.6)]{Sun2010}, we obtain the following result.

\begin{lemma}\label{mainth3lem3}
Let $p$ be an odd prime. Then, for any integer $m\not\eq0\pmod{p}$, we have
\begin{equation}\label{mainth3lem3eq}
\sum_{k=0}^{p-1}\f{\binom{2k}{k}}{(k+1)m^k}\eq 1-\f{m-4}{2}\l(\l(\f{m(m-4)}{p}\r)-1\r)\pmod{p}.
\end{equation}
\end{lemma}

We also need the following result which follows from \cite[(2.7)]{Sun2012} with $h=a=1$.

\begin{lemma}\label{mainth3lem4}
Let $p$ be an odd prime and let $m\in\Z$ with $p\nmid m$. Then
\begin{align}\label{mainth3lem4eq}
\sum_{k=0}^{p-1}\f{\binom{p-1}{k}\binom{2k}{k}}{(-m)^k}\eq& \f{2-m}{2}\l(\f{\Delta}{p}\r)^2u_{p-(\f{\Delta}{p})}(m-2,1)+\l(\f{\Delta}{p}\r)(m-4)^{p-1}\notag\\
&-\begin{cases}(m-4)\pmod{p^2}\quad&if\ p=3\ and\ 3\mid m-1,\\ 0\pmod{p^2}\quad&otherwise.\end{cases}
\end{align}
\end{lemma}

\medskip

\noindent{\it Proof of Theorem \ref{mainth3}}. By Lemmas \ref{mainth1lem1} and \ref{lemma3.1}, we obtain
\begin{align}\label{mainth3key1}
\sum_{k=0}^{p-1}\f{T_k(b,c^2)H_k}{(b+2c)^k}=&\sum_{k=0}^{p-1}H_k\sum_{j=0}^{k}\binom{k}{j}\binom{2j}{j}\l(\f{-c}{b+2c}\r)^j\notag\\
=&\sum_{j=0}^{p-1}\binom{2j}{j}\l(\f{-c}{b+2c}\r)^j\sum_{k=j}^{p-1}\binom{k}{j}H_k\notag\\
=&\sum_{j=0}^{p-1}\binom{2j}{j}\l(\f{-c}{b+2c}\r)^j\binom{p}{j+1}\l(H_{p-1}+\f1p-\f{1}{j+1}\r)\notag\\
\eq&\sum_{j=0}^{p-1}\f{\binom{p-1}{j}\binom{2j}{j}}{j+1}\l(\f{-c}{b+2c}\r)^j-p\sum_{j=0}^{p-1}\f{\binom{p-1}{j}\binom{2j}{j}}{(j+1)^2}\l(\f{-c}{b+2c}\r)^j\pmod{p^2},
\end{align}
where in the last step we have used the fact $H_{p-1}\eq0\pmod{p^2}$. Putting $n=p-1$, $x=-c/(b+2c)$ in \eqref{mainth3lem2eq} we have
\begin{align*}
&\f{-2c}{b+2c}\l(\sum_{j=0}^{p-1}\f{\binom{p-1}{j}\binom{2j}{j}}{j+1}\l(\f{-c}{b+2c}\r)^j-p\sum_{j=0}^{p-1}\f{\binom{p-1}{j}\binom{2j}{j}}{(j+1)^2}\l(\f{-c}{b+2c}\r)^j\r)\\
=&-1+\f{4cp}{b+2c}\sum_{j=0}^{p-1}\f{\binom{p-1}{j}\binom{2j}{j}}{j+1}\l(\f{-c}{b+2c}\r)^j+\f{b-2c}{b+2c}\sum_{j=0}^{p-1}\binom{p-1}{j}\binom{2j}{j}\l(\f{-c}{b+2c}\r)^j.
\end{align*}
Substituting this into \eqref{mainth3key1} we arrive at
\begin{align}\label{mainth3key2}
&\sum_{k=0}^{p-1}\f{T_k(b,c^2)H_k}{(b+2c)^k}\notag\\
\eq&\f{b+2c}{2c}-2p\sum_{j=0}^{p-1}\f{\binom{p-1}{j}\binom{2j}{j}}{j+1}\l(\f{-c}{b+2c}\r)^j-\f{b-2c}{2c}\sum_{j=0}^{p-1}\binom{p-1}{j}\binom{2j}{j}\l(\f{-c}{b+2c}\r)^j\notag\\
\eq&\f{b+2c}{2c}-2p\sum_{j=0}^{p-1}\f{\binom{2j}{j}}{j+1}\l(\f{c}{b+2c}\r)^j-\f{b-2c}{2c}\sum_{j=0}^{p-1}\binom{p-1}{j}\binom{2j}{j}\l(\f{-c}{b+2c}\r)^j\pmod{p^2}.
\end{align}
In view of Lemma \ref{mainth3lem3} with $m=\<(b+2c)/c\>_p$, we have
\begin{equation}\label{mainth3key3}
\sum_{j=0}^{p-1}\f{\binom{2j}{j}}{j+1}\l(\f{c}{b+2c}\r)^j\eq s+1-s\l(\f{d}{p}\r)\pmod{p}.
\end{equation}
Moreover, by Lemma \ref{mainth3lem4} with $m=t=\<(b+2c)/c\>_{p^2}$, we obtain
\begin{align}\label{mainth3key4}
\sum_{j=0}^{p-1}\binom{p-1}{j}\binom{2j}{j}\l(\f{-c}{b+2c}\r)^j\eq&-(s+1)\l(\f{d}{p}\r)^2u_{p-(\f d p)}(t-2,1)+\l(\f{d}{p}\r)(t-4)^{p-1}\notag\\
&-\begin{cases}(t-4)\pmod{p^2}\quad&if\ p=3\ and\ 3\mid t-1,\\ 0\pmod{p^2}\quad&otherwise.\end{cases}
\end{align}
Combining \eqref{mainth3key2}--\eqref{mainth3key4}, we complete the proof of \eqref{mainth3eq1}.

Set $b=c=1$. Then $s=-1/2,\ t=3,\ d=-3$. By \eqref{mainth3eq1}, for $p>3$ we have
$$
\sum_{k=0}^{p-1}\f{T_kH_k}{3^k}\eq \f{3}{2}+\f12\l(\f{-3}{p}\r)-2p\l(\f12+\f12\l(\f{-3}{p}\r)+\f{u_{p-(\f{-3}{p})}(1,1)}{8p}\r)\pmod{p^2}.
$$
Note that by the law of quadratic reciprocity,
$$
\l(\f{-3}{p}\r)\l(\f p3\r)=(-1)^{(p-1)/2}\l(\f{3}{p}\r)\l(\f p3\r)=(-1)^{(p-1)/2}(-1)^{(p-1)/2}=1.
$$
Thus $(\f{-3}{p})=(\f p3)$. It is easy to see that $u_n(1,1)=0$ for $n\in\N$ with $3\mid n$, and
$$
p-\l(\f{p}{3}\r)\eq0\pmod{3}.
$$
Hence $u_{p-(\f{-3}{p})}(1,1)=0$. Combining the above, we arrive at \eqref{mainth3eq2}. \qed

\end{document}